\documentclass[12pt,reqno]{amsart}

\usepackage[T1]{fontenc}
\usepackage{lmodern}
\usepackage[text={425pt,665pt},centering]{geometry}
\usepackage{amsmath,amssymb,amsthm}
\usepackage{microtype}
\usepackage{tikz}
\usetikzlibrary{arrows.meta}
\usepackage{float,needspace}
\usepackage[hidelinks]{hyperref}

\numberwithin{equation}{section}
\numberwithin{figure}{section}
\newtheorem{thm}{Theorem}[section]
\newtheorem{prop}[thm]{Proposition}
\newtheorem{lem}[thm]{Lemma}
\theoremstyle{remark}
\newtheorem{rem}{Remark}

\newcommand{\R}{\mathbb R}
\newcommand{\N}{\mathbb N}
\newcommand{\Z}{\mathbb Z}

\newcommand{\ep}{\varepsilon}

\newcommand{\Om}{\Omega}
\newcommand{\cG}{\mathcal G}
\newcommand{\cA}{\mathcal A}
\newcommand{\dist}{\operatorname{dist}}
\definecolor{upper}{RGB}{27,91,139}
\definecolor{lower}{RGB}{164,80,44}

\begin{document}
\title[Pathological viscosity solutions]
{Pathological viscosity solutions of Hamilton--Jacobi equations}
\author[H. V. Tran, Y. Yu]{Hung V. Tran, Yifeng Yu}

\date{}

\thanks{
H. V. Tran is partially supported by NSF grant DMS-2348305. 
}

\address[H. V. Tran]
{
Department of Mathematics, 
University of Wisconsin-Madison, Van Vleck Hall, 480 Lincoln Drive, Madison, Wisconsin 53706, USA}
\email{hung@math.wisc.edu}

\address[Y. Yu]
{
Department of Mathematics, 
University of California at Irvine, 
California 92697, USA}
\email{yifengy@uci.edu}
\subjclass[2020]{35F21, 49L25}
\keywords{Hamilton--Jacobi equations, viscosity solutions,
nowhere differentiable functions}
\begin{abstract}
We construct a continuous function $u$ on the closed unit ball in $\R^4$ and a smooth autonomous Hamiltonian $H:\R^4\to[1,2]$ for which $u$ solves $H(Du)=c$ in the unit ball in the viscosity sense for every $c\in[1,2]$. This reveals a striking and distinctive pathology of viscosity solutions: a fixed function and a fixed Hamiltonian need not determine the right-hand side of the equation.
\end{abstract}
\maketitle

\section{Introduction}

Write $B^d_1=B^d(0,1)$ for $d\in \N$. 
The following is the main result of this paper.

\begin{thm}\label{thm:main}
There are $u\in C(\overline{B^4_1})$ and $H\in C^\infty(\R^4;[1,2])$ such
that $u$ is a viscosity solution of
\begin{equation}\label{eq:main}
H(Du)=c\qquad\text{in }B^4_1
\end{equation}
for every $c\in[1,2]$. 
In particular, it solves both $H(Du)=1$ and $H(Du)=2$. 
The function $u$ is nowhere differentiable in $B^4_1$.
\end{thm}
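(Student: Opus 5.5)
The idea is to arrange that \emph{no} test function ever touches $u$ from above or from below. If that holds, the viscosity sub- and supersolution conditions are vacuous, and $u$ solves $H(Du)=c$ for every $c$ and every $H$. The range $[1,2]$ and the smoothness of $H$ would then be essentially cosmetic: any $H\in C^\infty(\R^4;[1,2])$ works once $u$ has empty superdifferential and empty subdifferential at every point of $B^4_1$. For instance, $H(p)=\tfrac32+\tfrac12\sin|p|^2$ would do, or whatever the authors choose. So the plan reduces to the following task.

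\emph{Construct $u\in C(\overline{B^4_1})$ such that $D^+u(x)=D^-u(x)=\emptyset$ for all $x\in B^4_1$.}

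Nowhere differentiability is then automatic, since differentiability at $x$ would force $Du(x)\in D^+u(x)\cap D^-u(x)$. The condition needed is the following. Suppose $\varphi\in C^1$ touches $u$ from above at $x$. Then $u(y)-u(x)\le \nabla\varphi(x)\cdot(y-x)+o(|y-x|)$. So it suffices to show, for every $x$ and every $p\in\R^4$,
\[
\limsup_{y\to x}\frac{u(y)-u(x)-p\cdot(y-x)}{|y-x|}=+\infty,
\qquad
\liminf_{y\to x}\frac{u(y)-u(x)-p\cdot(y-x)}{|y-x|}=-\infty .
\]
Two routes are possible.

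The first route is a lacunary (Weierstrass/Takagi type) construction,
\[
u(x)=\sum_k a^k\,\psi(b^k x), \qquad ab>1,
\]
where $\psi$ is a smooth periodic profile on $\R^4$. This is continuous since $a<1$. At scale $r=b^{-n}$, the $n$-th term oscillates by about $a^n$, which dominates $r=b^{-n}$. The earlier terms are smooth, and their contribution is linear up to an error of order $\sum_{k<n}a^kb^{2k}b^{-2n}\lesssim a^n$ times a small constant if $b$ is large. The later terms are bounded by $\sum_{k>n}a^k\lesssim a^{n+1}$, which is small relative to $a^n$ when $a$ is small. Hence in every ball of radius $\sim b^{-n}$ around $x$ one finds points $y_\pm$ with
\[
u(y_\pm)-u(x)-p\cdot(y_\pm-x)\gtrless \pm c\,a^n .
\]
The linear term $p\cdot(y-x)$ has to be handled so that it does not cancel the oscillation on both sides. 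I would do this by choosing $y_\pm$ symmetric, $y_\pm=x\pm h$, so that the linear parts cancel in a second-difference. One then needs the second difference $\psi(z+h')+\psi(z-h')-2\psi(z)$ to take values of both signs of size bounded below, uniformly in $z$. A profile like $\psi(z)=\sum_i\cos(2\pi z_i)$ allows a choice of $h'$ per $z$. Dividing by $|h|\sim b^{-n}$ gives $(ab)^n\to\infty$, which yields both the $\limsup$ and $\liminf$ statements.

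The dimension $4$ in the statement suggests the authors do something more geometric, perhaps building $u$ from lower-dimensional pieces, or tying $H$ to the structure via a level-set argument. My plan above does not need $d=4$.

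The main obstacle is making the lower bound on the oscillation uniform in $x$, including the interplay of the $o(|y-x|)$ errors, so that a single test scale per $n$ works at every point. This is where the choices $b\gg1$ and $a$ small with $ab>1$ must be made carefully. I would first verify the one-dimensional estimate for $\sum a^k\cos(2\pi b^k t)$ via the symmetric second difference. Then I would lift it to $\R^4$ by applying it along a coordinate direction. Finally, I would check that the remaining coordinates only add smooth, hence controlled, terms.
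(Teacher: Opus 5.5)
Your reduction asks for something no continuous function can satisfy: it is impossible to have $D^+u(x)=D^-u(x)=\emptyset$ for all $x\in B^4_1$. Take any closed ball $\overline{B}(x_0,r)\subset B^4_1$ and $\ep>0$ with $r^2/\ep>\operatorname{osc}_{\overline{B}(x_0,r)}u$. On the sphere $|x-x_0|=r$, the function $u(x)-|x-x_0|^2/\ep$ is strictly below its value $u(x_0)$ at the center. So it attains its maximum over the closed ball at an interior point $\hat x$, and $2(\hat x-x_0)/\ep\in D^+u(\hat x)$. Minimizing $u(x)+|x-x_0|^2/\ep$ in the same way gives a point with nonempty $D^-u$. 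Hence points carrying upper tests and points carrying lower tests are both dense; the paper makes this remark right after defining $\cG^\pm$. Your desired estimate ($\limsup=+\infty$ and $\liminf=-\infty$ for every $x$ and every $p$) therefore fails for every $u\in C(\overline{B^4_1})$. Your own example shows this concretely. With $\psi(z)=\sum_i\cos(2\pi z_i)$, every term $a^k\psi(b^kx)$ is maximized at $x=0$, so $u$ has a global maximum at $0\in B^4_1$ and $0\in D^+u(0)$. Correspondingly, the second difference $2\sum_i\cos(2\pi z_i)\bigl(\cos(2\pi h_i')-1\bigr)$ is $\le0$ for every $h'$ when $z\in\Z^4$, so it does not take both signs uniformly in $z$. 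By the argument above, no choice of profile can repair this.

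Consequently $H$ is far from cosmetic. Note that with $H\equiv1$ and $c=2$ your claim would make $u$ a solution of $1=2$, which fails at any point with a lower test. Since both kinds of tests occur, requiring the sub- and supersolution inequalities for all $c\in[1,2]$ forces $H\le1$ on $\bigcup_xD^+u(x)$ and $H\ge2$ on $\bigcup_xD^-u(x)$. By continuity of $H$, the closures of these two gradient ranges must then be disjoint. This is \eqref{eq:separation}, and it is the actual content of the theorem. Nowhere differentiability does not give it: if $u$ has, say, an interior local maximum and an interior local minimum, then $0$ lies in both ranges and no $H$ exists.

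The paper obtains the separation in two steps. First, it builds a planar $f$ by iterated square-root-cusp perturbations along a dense polygonal skeleton. Downward cusps carry only superdifferentials, upward cusps carry only subdifferentials, and points off the skeleton carry none. The gradient lower bounds \eqref{eq:generation}, together with the boundary blow-up of the directional derivatives, make $\cG^\pm(f;\Om)$ disjoint and relatively closed. Second, it lifts to $U(x,y)=F(x)+x\cdot y$, whose semidifferentials $(p+y,x)$ record the contact point $x$. This converts disjointness of the graphs in $(x,p)$-space into disjointness of the gradient ranges in $\R^4$, and it is where the dimension $4$ comes from. Your lacunary series would have to be replaced by a construction of this kind.
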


A fundamental question in the theory of viscosity solutions
\cite{CL83, CEL84, TranHJ} concerns the single-valuedness of
Hamilton--Jacobi operators. In the classical setting, a fixed differentiable function $u$ and a fixed Hamiltonian $H$ uniquely determine $H(Du)$. The viscosity
framework extends the meaning of the equation to continuous functions
through upper and lower test functions; the question is whether this
extension preserves single-valuedness. More precisely, given an open
set $\Omega\subset\mathbb R^n$, $H\in C(\mathbb R^n)$, and
$u,f,g\in C(\Omega)$, does the assumption that $u$ solves both
\[
H(Du)=f
\qquad\text{and}\qquad
H(Du)=g
\]
in $\Omega$ in the viscosity sense imply that $f=g$?

The question was highlighted in its global form in the \emph{User's Guide}
\cite[Remark~2.5]{CIL92}, which systematically developed the modern framework
of viscosity solutions. It was  discussed further in
\cite[Remark~3.6]{DEH96} and \cite[p.~1301]{Yu09}.
For locally Lipschitz $u$, the conclusion follows immediately from
the almost everywhere differentiability of $u$ and the continuity of $f$ and $g$.
For merely continuous $u$, however, upper and lower test functions
need not exist at the same points, and single-valuedness is no longer
automatic. 
Single-valuedness is known in dimension one and for convex or uniformly
continuous Hamiltonians \cite{Evans84, Frankowska86}, and was established
under a weak coercivity condition in \cite{NS96}.

We show that continuity of $H$ alone does not suffice in higher
dimensions. In fact, single-valuedness can fail for an entire interval
of constant right-hand sides: we construct, in dimension four, a
continuous function $u$ and a bounded smooth autonomous Hamiltonian
$H$ such that the same $u$ solves $H(Du)=c$ in the viscosity sense for
every $c\in[1,2]$.

For non-autonomous Hamiltonians, we have a corresponding result in dimension two.

\begin{thm}\label{thm:main2}
There are $F\in C(\overline{B^2_1})$ and $K\in C^\infty(\R^4;[1,2])$ such
that $F$ is a viscosity solution of
\begin{equation}\label{eq:mainK}
K(x,DF)=c\qquad\text{in }B^2_1
\end{equation}
for every $c\in[1,2]$. 
\end{thm}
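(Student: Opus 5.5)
The plan is to reduce the statement to a purely geometric property of the semidifferentials of $F$, and then to build the Hamiltonian $K$ by a smooth Urysohn construction. Recall that $F$ solves $K(x,DF)=c$ in the viscosity sense if and only if
$K(x,p)\le c$ for every $p\in D^+F(x)$ and $K(x,p)\ge c$ for every $p\in D^-F(x)$, for all $x\in B^2_1$. Since $K$ takes values in $[1,2]$, this holds simultaneously for all $c\in[1,2]$ exactly when
\[
K\equiv 1 \text{ on } \Gamma^+:=\{(x,p): x\in B^2_1,\ p\in D^+F(x)\},\qquad K\equiv 2 \text{ on } \Gamma^-:=\{(x,p): x\in B^2_1,\ p\in D^-F(x)\}.
\]
So it suffices to produce $F\in C(\overline{B^2_1})$ for which the closures $\overline{\Gamma^+}$ and $\overline{\Gamma^-}$ (taken in $\R^2\times\R^2=\R^4$) are disjoint. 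Given that, a smooth Urysohn function $\psi\in C^\infty(\R^4;[0,1])$ with $\psi=0$ near $\overline{\Gamma^+}$ and $\psi=1$ near $\overline{\Gamma^-}$ exists, because disjoint closed sets in $\R^4$ can be separated smoothly. Setting $K=1+\psi$ then finishes the proof. Disjointness forces $D^+F(x)$ and $D^-F(x)$ never to be nonempty at the same point, so $F$ is necessarily nowhere differentiable.

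To build $F$ I would try to transplant the four-dimensional construction behind Theorem~\ref{thm:main}, viewing $\R^4=\R^2_x\times\R^2_p$. In Theorem~\ref{thm:main} the same mechanism must be at work, with the sets of super- and subgradients of $u$ lying in disjoint closed regions $\{H=1\}$ and $\{H=2\}$ of gradient space. In dimension two we cannot keep these gradient sets globally disjoint, because the one-dimensional-type obstructions of \cite{Evans84, Frankowska86} apply. We can, however, let the separating regions depend on $x$: the two extra dimensions of the autonomous construction are traded for the base variable $x$.

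Concretely, I would build $F$ as a lacunary series $F=\sum_k a_k\,\phi(\lambda_k x)$, or as an iterated self-similar construction, in which each generation consists of sharp concave ``ridges'' and convex ``valleys'' arranged on a grid. The ridges produce superdifferentials and the valleys subdifferentials. The directions and slopes are arranged so that near any base point $x$ every element of $D^+F$ has, say, second component close to $\theta^+(x)$, while every element of $D^-F$ has second component close to $\theta^-(x)\neq\theta^+(x)$, with $\theta^\pm$ continuous. The steps in order are:
\begin{enumerate}
\item Define the generations, with $a_k\to0$ rapidly and $\lambda_k a_k\to\infty$, and check continuity of $F$.
\item Show that at each $x$, any superdifferential (resp.\ subdifferential) can only come from a point sitting on ridges (resp.\ valleys) at all sufficiently fine scales.
\item Compute the admissible gradients there, yielding uniform separation $|p^+-p^-|\ge\delta>0$ between nearby elements of $\Gamma^+$ and $\Gamma^-$.
\end{enumerate}

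The main obstacle is step~2 together with the closure requirement in step~3. One must control $D^\pm F(x)$ at \emph{every} point, including points where the scales interact in a degenerate way, and then ensure that limits of super-pairs $(x_j,p_j)$ never coincide with limits of sub-pairs. I would first attempt to get this from a quantitative statement: at scale $\lambda_k^{-1}$, a touching quadratic from above forces the gradient into a $\delta$-neighborhood of the ridge slope set, and this neighborhood stays uniformly away from the valley slope set.
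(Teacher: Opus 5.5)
\textbf{There is a genuine gap: the construction of $F$ is missing, and the separation mechanism you propose cannot exist.}

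Your reduction is correct and is exactly the paper's argument. Since $K$ takes values in $[1,2]$, $F$ solves \eqref{eq:mainK} for all $c\in[1,2]$ iff $K=1$ on the super-pairs and $K=2$ on the sub-pairs. So one only needs an $F$ whose two jet graphs have disjoint closures in $\R^4$, followed by a smooth Urysohn function. But producing that $F$ is the entire content of the theorem, and your proposal does not do it. The data $\phi$, $a_k$, $\lambda_k$ are never specified, and steps 2--3, which you yourself call the main obstacle, are left open.

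Worse, the separation you aim for is impossible. Suppose that near some point all $p\in D^+F(x)$ have $p_2$ within $\delta$ of $\theta^+(x)$, and all $p\in D^-F(x)$ have $p_2$ within $\delta$ of $\theta^-(x)$, with disjoint windows and continuous $\theta^\pm$. Subtract $\eta_0x_2$ for a suitable constant $\eta_0$ (and possibly reflect $x_2$). This gives $\widetilde F$ on a small square $Q$ with $p_2\ge\gamma>0$ on all supergradients and $p_2\le-\gamma$ on all subgradients. Then $\widetilde F$ solves the linear, hence convex, equation $-\partial_{x_2}\widetilde F=c$ in $Q$ for every $c\in[-\gamma,\gamma]$, which the convex single-valuedness results you cite rule out.

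A direct argument is also available. The function $m_\ep(t)=\max_{|x_1-a|\le r}\bigl[\widetilde F(x_1,t)-\ep^{-1}(x_1-a)^2\bigr]$ has no interior local maximum, since one would yield a supergradient with $p_2=0$; so $m_\ep$ is quasiconvex. Letting $\ep\to0$, each profile $t\mapsto\widetilde F(a,t)$ is quasiconvex, and symmetrically quasiconcave, hence monotone. At a point $(a,t_0)$ with $D^+\widetilde F\neq\emptyset$ one has $\widetilde F(a,t_0)>\widetilde F(a,t_0-\tau)$ for small $\tau>0$. By continuity, all profiles with $a'$ near $a$ are then nondecreasing. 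A nondecreasing profile admits no subgradient with $p_2<0$, which contradicts the density of points where $D^-\widetilde F\neq\emptyset$. The same applies to any fixed linear functional of $p$. So the $\theta^\pm$-mechanism of steps 2--3 cannot be realized by any arrangement of ridges and valleys.

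What is needed is a separation that is only topological, and this is what the paper builds. Its $F$ is the rescaled $f$ of Proposition~\ref{prop:planar}, obtained as follows:
\begin{itemize}
\item Convex polygons are cut iteratively by parallel lines carrying square-root cusps, downward on even cuts and upward on odd ones.
\item The cusp amplitude vanishes quadratically at the parent boundary, so earlier jets are preserved exactly.
\item On a cut, the nonempty semidifferential is a whole line $\{p\cdot e_P=\alpha(z)\}$, so there is no gap in any fixed direction; off the skeleton, both semidifferentials are empty.
\item A generation-$k$ cut has $|p|\ge k+1-\ep_k$, and $D_{e_P}f_{k-1}\to\infty$ at the parent boundary.
\end{itemize}
Hence a bounded sequence of super-pairs lies, after passing to a subsequence, on a single cut, and cannot converge to the parent boundary. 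So $\cG^\pm(F;O)$ are relatively closed and disjoint. Restricting to $\overline{B^2_1}\Subset O$ gives disjoint closed sets in $\R^4$, and a smooth $K$ separates them.

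Finally, the results for convex Hamiltonians you cite give no obstruction to a two-dimensional autonomous example. The paper explicitly leaves dimensions two and three open.
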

\subsection{Settings}
Let $O\subset\R^m$ be open and $v\in C(O)$. 
We use the usual first-order superdifferential and subdifferential: For $x\in O$,
\begin{align*}
D^+v(x)&=\left\{p\in\R^m:
 \limsup_{\substack{h\to0\\h\ne0}}
 \frac{v(x+h)-v(x)-p\cdot h}{|h|}\le0\right\},\\
D^-v(x)&=\left\{p\in\R^m:
 \liminf_{\substack{h\to0\\h\ne0}}
 \frac{v(x+h)-v(x)-p\cdot h}{|h|}\ge0\right\},
\end{align*}
If a $C^1$ function $\varphi$ touches $v$ from above at $x$, then $D\varphi(x)\in D^+v(x)$; a lower test gives a gradient in $D^-v(x)$.

For $H(Du)=c$ in $B_1^4$, the viscosity conditions are
\begin{align*}
H(p)&\le c &&\text{ for all $p\in D^+u(x)$ and $x\in B_1^4$,}\\
H(p)&\ge c &&\text{ for all $p\in D^-u(x)$ and $x\in B_1^4$.}
\end{align*}
See \cite[Chapter 1]{TranHJ}.
At a point $x\in B_1^4$ having neither kind of test, both requirements are empty.
The construction exploits this elementary feature. Note that Theorem \ref{thm:main} is equivalent to finding $u\in C(\overline{B^4_1})$  such that
\begin{equation}\label{eq:separation}
    \overline{\bigcup_{x\in B_1^4}D^+u(x)} \ \cap \ \overline{\bigcup_{x\in B_1^4}D^-u(x)}=\emptyset.
\end{equation}
If they are disjoint, $H$ can then be chosen as a smooth separation function for the two sets.

\subsection{Ideas of the construction}

It is useful to keep track of the point of contact as well as the gradient. 
For $v\in C(O)$, set
\[
\cG^\pm(v;O)=\{(x,p)\in O\times\R^m:p\in D^\pm v(x)\}.
\]
Both graphs are nonempty. Indeed, on any closed ball compactly contained
in $O$, sufficiently steep paraboloids can be translated vertically
to touch $v$ from above and below at interior points.  If both $D^+v(x)$ and $D^-v(x)$ are nonempty, then $v$ is differentiable at $x$, and
\[
D^+v(x)=D^-v(x)=\{Dv(x)\}.
\]
In particular,  $\cG^+(v;O)\cap \cG^-(v;O)=\emptyset$ is equivalent to  that $v$ is nowhere differentiable in $O$.

The key step is to construct a nowhere-differentiable planar $F$ with graphs $\cG^\pm(F;O)$ that are relatively closed in $O\times \R^2$, where $\overline{B^2_1}\Subset O\subset\R^2$. 
 Disjointness in the space $(x,p)\in \R^2\times \R^2$ does not by itself separate the two \emph{ranges} of gradients: the two graphs may project onto overlapping sets of $p$'s. To obtain (\ref{eq:separation}), we add additional spatial variables $y\in\R^2$.  For
\[
U(x,y)=F(x)+x\cdot y \qquad \text{where } (x,y)\in O\times\R^2,
\]
the second block of the gradient of every upper or lower test function is $x$. 
Hence the gradient itself remembers the planar point where the test occurs.

\subsection*{Organization of the paper}
We prove the lifting idea in Lemma \ref{lem:lift}.
We then construct the planar function $F$ with disjoint graphs $\cG^\pm(F;O)$ in Section \ref{sec:F}.
The proofs of Theorems \ref{thm:main}--\ref{thm:main2} are given in Section \ref{sec:F}.

\section{From planar graphs to an autonomous Hamiltonian}

\begin{lem}\label{lem:lift}
Let $m\ge1$, $K=\overline{B^m_1}\Subset O\subset\R^m$, and let $F\in C(O)$.
Suppose $\cG^+(F;O)$ and $\cG^-(F;O)$ are relatively closed and disjoint. 
Then, on $\overline{B^{2m}_1}$, the function
\begin{equation}\label{eq:lift}
U(x,y)=F(x)+x\cdot y
\end{equation}
is a viscosity solution of $H(DU)=c$ in $B^{2m}_1$ for every $c\in[1,2]$, with one $H\in C^\infty(\R^{2m};[1,2])$. 
It is nowhere differentiable in $B^{2m}_1$.
\end{lem}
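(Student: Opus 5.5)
The plan is to compute the super- and subdifferentials of $U$ explicitly in terms of those of $F$. Then I will show that the two sets in \eqref{eq:separation} are disjoint compact subsets of $\R^{2m}$, and take $H$ to be a smooth separating function.

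\emph{Step 1 (structure of $D^\pm U$).} Fix $(x,y)$ with $x\in O$. The function $(x',y')\mapsto x'\cdot y'$ is smooth with gradient $(y,x)$, so $D^\pm U(x,y)=(0,0)+\{(y,x)\}+D^\pm\bigl(F(x')\bigr)(x,y)$, since adding a $C^1$ function shifts super/subdifferentials. Next I claim that, for $G(x',y')=F(x')$,
\[
D^\pm G(x,y)=D^\pm F(x)\times\{0\}.
\]
The inclusion $\supseteq$ is immediate from the definition. For $\subseteq$, take $(p,q)\in D^+G(x,y)$. Testing with increments $h=(0,k)$ gives $-q\cdot k\le o(|k|)$ for all small $k$, hence $q=0$. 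Testing with $h=(k,0)$ then gives $p\in D^+F(x)$. The lower case is symmetric. Therefore
\[
D^\pm U(x,y)=\{(p+y,\,x):p\in D^\pm F(x)\}.
\]
Since $\cG^+(F;O)\cap\cG^-(F;O)=\emptyset$, no point $x$ has both $D^+F(x)$ and $D^-F(x)$ nonempty. Hence no point $(x,y)$ has both $D^+U$ and $D^-U$ nonempty, and $U$ is nowhere differentiable.

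\emph{Step 2 (separation of gradient ranges).} Let $S^\pm=\bigcup_{(x,y)\in B^{2m}_1}D^\pm U(x,y)$. Every element of $S^\pm$ has the form $(p+y,x)$ with $(x,p)\in\cG^\pm(F;O)$, $|x|\le1$ and $|y|\le1$. The main obstacle is showing that $\overline{S^+}\cap\overline{S^-}=\emptyset$, which requires boundedness of $p$.

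I would argue as follows. The set $A^\pm=\cG^\pm(F;O)\cap(K\times\R^m)$ is closed in $\R^{2m}$ because $K\Subset O$ and the graph is relatively closed in $O\times\R^m$. Suppose $(a,b)\in\overline{S^+}\cap\overline{S^-}$. Then there are sequences $(p_n+y_n,x_n)\to(a,b)$ with $(x_n,p_n)\in A^+$, and $(p_n'+y_n',x_n')\to(a,b)$ with $(x_n',p_n')\in A^-$. Because $|y_n|,|y_n'|\le1$, the sequences $p_n,p_n'$ are bounded. Passing to subsequences, $x_n,x_n'\to b$, $p_n\to p$ and $p_n'\to p'$, with $(b,p)\in A^+$ and $(b,p')\in A^-$. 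This contradicts the disjointness of the graphs: $b$ would carry both an upper and a lower gradient.

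The same compactness argument shows that $\overline{S^\pm}$ are bounded, hence compact: $b\in K$ and $a$ lies in a bounded set because $p$ stays bounded along such sequences. If boundedness fails, I would simply note that closedness and disjointness are all that the separation step needs.

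\emph{Step 3 (choice of $H$).} The sets $\overline{S^+}$ and $\overline{S^-}$ are disjoint closed sets in $\R^{2m}$, so there is a smooth $\psi:\R^{2m}\to[0,1]$ with $\psi=0$ on $\overline{S^+}$ and $\psi=1$ on $\overline{S^-}$. One construction is a smooth function regularizing $\dist(\cdot,\overline{S^+})/(\dist(\cdot,\overline{S^+})+\dist(\cdot,\overline{S^-}))$. Another is a smooth Urysohn function, using positive distance between the sets in the compact case or a smooth partition argument in general.

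Set $H=1+\psi$. Then $H(p)=1\le c$ on $S^+$ and $H(p)=2\ge c$ on $S^-$ for every $c\in[1,2]$. This is exactly the viscosity subsolution and supersolution condition for $H(DU)=c$ in $B^{2m}_1$.
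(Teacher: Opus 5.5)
Your proof is correct and follows the paper's route. You use the same formula $D^\pm U(x,y)=\{(p+y,x):p\in D^\pm F(x)\}$ and the same compactness argument: bounded $y$ forces bounded $p$, and relative closedness of $\cG^\pm(F;O)$ over $K$ then gives closed, disjoint gradient sets, which a smooth $H$ separates. You work with $\overline{S^\pm}$, while the paper uses the slightly larger closed sets $\cA^\pm$.

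One correction: your aside that $\overline{S^\pm}$ are bounded, hence compact, is false in general. In the paper's application, $D^\pm F(x)$ is an entire affine line at cut points. Keep only your fallback, that closedness and disjointness suffice. The separation then needs a smooth Urysohn function for disjoint closed sets in $\R^{2m}$, since these sets may be at zero distance and a fixed-scale mollification would not do.
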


\begin{proof}
We can always regard $U$ as a function on $O\times\R^m$. 
For every $(x,y)$ there, we claim that
\begin{equation}\label{eq:jet-lift}
D^\pm U(x,y)=\{(p+y,x):p\in D^\pm F(x)\}.
\end{equation}
For one inclusion, let $(a,b)\in D^\pm U(x,y)$. 
Increments $(0,k)$ of both signs force $b=x$, since $U(x,y+k)-U(x,y)=x\cdot k$. 
Restriction to increments $(h,0)$ then gives $a-y\in D^\pm F(x)$. 
Conversely, for $p\in D^\pm F(x)$, we have
\[
U(x+h,y+k)-U(x,y)-(p+y)\cdot h-x\cdot k
=F(x+h)-F(x)-p\cdot h+h\cdot k.
\]
The last term is $h\cdot k=O(|(h,k)|^2)$, proving the other inclusion.

Denote by
\begin{equation}\label{eq:A}
\cA^\pm=\{(p+y,x):x,y\in K,\ p\in D^\pm F(x)\}
            \subset\R^{2m}.
\end{equation}
The closed $2m$-dimensional unit ball is contained in $K\times K$, and so \eqref{eq:jet-lift} gives
\[
D^\pm U(x,y)\subset\cA^\pm
             \qquad\text{for }(x,y)\in B^{2m}_1.
\]
As $\cG^+(F;O)$ and $\cG^-(F;O)$ are relatively closed and disjoint, $\cA^\pm$ are both closed and disjoint.
Furthermore, $\cA^\pm$ are both nonempty (see, e.g., \cite[Proposition 1.10]{TranHJ}).

We may therefore choose $H\in C^\infty(\R^{2m};[1,2])$ to be a smooth separation function of $\cA^\pm$ such that,
\begin{equation}\label{eq:distance-H}
H=1 \qquad \text{on ${\cA^+}$} \qquad \mathrm{and} \qquad   H=2\qquad  \text{on ${\cA^-}$}. 
\end{equation}

Finally, disjointness of the two graphs makes $F$ nowhere differentiable. 
If $U$ were differentiable at an interior point, restriction to increments $(h,0)$ in \eqref{eq:lift} would make $F$ differentiable there as well. 
This proves the last assertion.
\end{proof}

\section{Construction of the planar function F}\label{sec:F}

\subsection{Polygonal constructions}
We will start our construction on a fixed polygon.
Then, we will iteratively use this construction in the next subsection.

For $t\in\R$ define
\begin{equation}\label{eq:wave}
d(t)=\dist(t,2\mathbb Z),\qquad
w(t)=\tfrac12\arcsin(1-2d(t)).
\end{equation}
Then, $w$ is continuous, $2$-periodic, smooth in every $(j,j+1)$ for $j\in \Z$, and $|w|\le\pi/4<1$. $w(0)=\frac{\pi}{4}$ and $w(1)=-\frac{\pi}{4}$. On $(0,1)$,
\[
w'(t)=-\frac{1}{2\sqrt{t(1-t)}}.
\]
Also, $w'$ has constant sign in $(j,j+1)$ for $j\in \Z$, and
\begin{equation}\label{eq:wave-slope}
|w'|\ge1,\qquad
\lim_{t\to j}|w'(t)|=\lim_{t\to j+1}|w'(t)|=\infty.
\end{equation}
In fact, for $j\in \Z$ and $|\tau|\le1$,
\begin{equation}\label{eq:wave-cusp}
w(j+\tau)-w(j)=
\begin{cases}
-\arcsin\sqrt{|\tau|},\qquad&j\text{ even},\\
\phantom{-}\arcsin\sqrt{|\tau|},\qquad&j\text{ odd}.
\end{cases}
\end{equation}
The graph of $w$ has a downward square-root cusp at each even integer and an upward square-root cusp at each odd integer (see Figure~\ref{fig:cusps}).

\begin{figure}[H]
\centering
\begin{tikzpicture}[>=Latex,scale=1.05,font=\small]
  \begin{scope}
    \draw[->,gray!75] (-1.2,0)--(1.25,0) node[right] {$t$};
    \draw[->,gray!75] (0,-.04)--(0,1.85);
    \draw[upper,very thick,domain=-1:-.002,samples=70]
      plot (\x,{1.45-.78*sqrt(abs(\x))});
    \draw[upper,very thick,domain=.002:1,samples=70]
      plot (\x,{1.45-.78*sqrt(abs(\x))});
    \fill[upper] (0,1.45) circle (2pt);
    \draw[densely dashed] (-1.0,1.45)--(1.0,1.45);
    \node at (0,-.46) {even integer: downward cusp};
  \end{scope}
  \begin{scope}[xshift=6cm]
    \draw[->,gray!75] (-1.2,0)--(1.25,0) node[right] {$t$};
    \draw[->,gray!75] (0,-.04)--(0,1.85);
    \draw[lower,very thick,domain=-1:-.002,samples=70]
      plot (\x,{.43+.78*sqrt(abs(\x))});
    \draw[lower,very thick,domain=.002:1,samples=70]
      plot (\x,{.43+.78*sqrt(abs(\x))});
    \fill[lower] (0,.43) circle (2pt);
    \draw[densely dashed] (-1.0,.43)--(1.0,.43);
    \node at (0,-.46) {odd integer: upward cusp};
  \end{scope}
\end{tikzpicture}
\caption{The cusps.}
\label{fig:cusps}
\end{figure}

Let $P\subset\R^2$ be an open convex polygon.
An open convex polygon is always understood to be bounded.
Write $P=\bigcap_{i=1}^q\{\ell_i>0\}$, where the affine $\ell_i$
give inward distances from the supporting lines of the sides.
Thus, $|D\ell_i|=1$ and
\[
d_P(z):=\dist(z,\partial P)=\min_i\ell_i(z)\qquad \text{for }z\in P.
\]
For $z\in P$, set
\[
b_P(z)=\prod_{i=1}^q\ell_i(z)^2.
\]
It is clear that $b_P$, $Db_P$, and $b_P/d_P^2$ are bounded in $P$. 
Choose
\[
M_P=1+\sup_P b_P+\sup_P|Db_P|+\sup_P(b_P/d_P^2).
\]
For $\ep>0$, set
\[
a^\ep_P=\frac{\varepsilon b_P}{M_P}.
\]
When there is no confusion, we write $a^\ep_P$ as $a_P$.
We have, for $z\in P$,
\begin{equation}\label{eq:amplitude}
0<a_P(z)\le\varepsilon\min\{1,\dist(z,\partial P)^2\},
\qquad |Da_P(z)|\le\varepsilon.
\end{equation}

The following lemma provides the key iterative mechanism of the construction.
Starting from a pair $(g, P)$ consisting of a function and a polygon, it shows
how to subdivide $P$ by parallel cuts into smaller open convex polygons and
perturb $g$ to a function $\widetilde g$, with $\widetilde g=g$ on $\partial P$,
so that on each resulting subpolygon the pair $(\widetilde g,P_j)$, with a
suitable new orthonormal pair of directions, satisfies properties analogous
to those of $(g,P)$.
\begin{lem}\label{lem:polygon}
Let $P\subset\R^2$ be an open convex polygon, and $g\in C(\overline P)\cap C^\infty(P)$. 
Suppose that $(e,v)$ is a given orthonormal pair of vectors.
Write $A(z)=D_e g(z)$ and $B(z)=D_vg(z)$ for $z\in P$.
Assume, for some $L\ge2$ and $C\ge0$,
\begin{equation}\label{eq:parent}
\begin{cases}
    A(z) \ge L \qquad &\text{ for } z\in P,\\
    A(z)\to\infty\qquad &\text{as }\dist(z,\partial P)\to0,\\
    |B(z)| \leq C A(z)\qquad &\text{ for } z\in P.
\end{cases}
\end{equation}
Given $T>L$, $\ep \in (0,1/16)$, and $N_0\in \N$, one can find an integer $N\ge N_0$ such that: for
\[
\widetilde g(z)=g(z)+a_P(z)w(Nv\cdot z),
\]
each nonempty strip polygon
\[
P_j=P\cap\{j<Nv\cdot z<j+1\}
\]
has a constant orthonormal pair $(e_j,v_j)$ for which
\begin{equation}\label{eq:child}
\begin{cases}
D_{e_j}\widetilde g\ge T \qquad &\text{in }P_j,\\
D_{e_j}\widetilde g(z)\to\infty \qquad &\text{as 
$\dist(z,\partial P_j)\to0$ from inside $P_j$},\\
|D_{v_j}\widetilde g|
 \le8(C+1)D_{e_j}\widetilde g\qquad &\text{in }P_j.
\end{cases}
\end{equation}

At $z\in P$ with $Nv\cdot z=j\in\mathbb Z$, let $\alpha(z)=D_eg(z)+w(j)D_ea_P(z)$.
If $j$ is even, then
\begin{equation}\label{eq:even-jets}
D^+\widetilde g(z)=\{p:p\cdot e=\alpha(z)\},
\qquad D^-\widetilde g(z)=\emptyset.
\end{equation}
If $j$ is odd, then
\begin{equation}\label{eq:odd-jets}
D^-\widetilde g(z)=\{p:p\cdot e=\alpha(z)\},
\qquad D^+\widetilde g(z)=\emptyset.
\end{equation}
Every $p$ in the nonempty semidifferential satisfies
\begin{equation}\label{eq:cut-bound}
|p|\ge A(z)-\varepsilon.
\end{equation}
\end{lem}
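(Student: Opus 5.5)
The plan is to compute $D\widetilde g$ explicitly inside each strip, tilt $e$ towards $\pm v$ by a fixed small angle, and read the semidifferentials on the cut lines off the cusp expansion \eqref{eq:wave-cusp}.

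\textbf{Gradient inside a strip.} In $P$ away from the cut lines,
\[
D\widetilde g=Dg+w(Nv\cdot z)\,Da_P+a_P\,N\,w'(Nv\cdot z)\,v .
\]
On $P_j$ the sign $s_j\in\{\pm1\}$ of $w'$ is constant, and $|w'|\ge1$ by \eqref{eq:wave-slope}. Write $\Phi=a_PN|w'(Nv\cdot z)|\ge0$. By \eqref{eq:amplitude} and $|w|<1$,
\[
D_e\widetilde g=A+O(\ep),\qquad D_v\widetilde g=B+O(\ep)+s_j\Phi .
\]

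\textbf{Choice of $(e_j,v_j)$.} Fix $\theta=\frac1{2(C+1)}$ and set
\[
e_j=\frac{e+s_j\theta v}{\sqrt{1+\theta^2}},\qquad v_j=\frac{v-s_j\theta e}{\sqrt{1+\theta^2}} .
\]
Then
\[
\sqrt{1+\theta^2}\,D_{e_j}\widetilde g\ \ge\ A-\theta CA-2\ep+\theta\Phi\ \ge\ \tfrac12A-2\ep+\theta\Phi ,
\]
which is at least $\tfrac14A+\theta\Phi$ because $A\ge L\ge2$ and $\ep<1/16$. Similarly
\[
\sqrt{1+\theta^2}\,|D_{v_j}\widetilde g|\le CA+\Phi+\theta A+2\ep\le (C+2)A+\Phi .
\]
Comparing the two bounds gives
\[
|D_{v_j}\widetilde g|\le 8(C+1)D_{e_j}\widetilde g ,
\]
since $8(C+1)\big(\tfrac14A+\theta\Phi\big)=2(C+1)A+4\Phi$.

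\textbf{Lower bound $T$ and blow-up.} Use the hypothesis $A\to\infty$ to pick $\delta>0$ with $A\ge 8T$ on $\{d_P<\delta\}$. On the compact set $\{d_P\ge\delta\}\cap\overline P$ we have $a_P\ge a_\delta>0$. Choose $N\ge N_0$ with $\theta a_\delta N\ge 2T$. Then on every $P_j$ either $\tfrac14A\ge 2T$ or $\theta\Phi\ge 2T$, so $D_{e_j}\widetilde g\ge T$.

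For the blow-up, let $z\to\partial P_j$ from inside $P_j$. A point of $\partial P_j$ lies either on $\partial P$ or on a cut line $\{Nv\cdot z\in\{j,j+1\}\}$.
\begin{itemize}
\item Near $\partial P$ we have $A\to\infty$.
\item Near a cut line inside $P$, $a_P$ is bounded below locally while $|w'|\to\infty$ by \eqref{eq:wave-slope}, so $\Phi\to\infty$.
\end{itemize}
Since $D_{e_j}\widetilde g\ge(\tfrac14A+\theta\Phi)/\sqrt{1+\theta^2}$ with both terms nonnegative, this handles all cases, including corners where a cut line meets $\partial P$. I expect this step, uniform control of $N$ and of the blow-up near $\partial P$ and corners, to be the main point requiring care.

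\textbf{Jets on the cut lines.} Take $z$ with $Nv\cdot z=j$. Using \eqref{eq:wave-cusp} with $\tau=Nv\cdot h$,
\[
\widetilde g(z+h)-\widetilde g(z)=\big(Dg(z)+w(j)Da_P(z)\big)\cdot h+\sigma_j\,a_P(z+h)\arcsin\sqrt{N|v\cdot h|}+o(|h|),
\]
where $\sigma_j=-1$ for $j$ even and $+1$ for $j$ odd. The error from replacing $a_P(z+h)$ by $a_P(z)$ is $O(|h|)\cdot O(\sqrt{|h|})=o(|h|)$, and $a_P(z)>0$.

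Take $j$ even and test $p$ against increments $h$.
\begin{itemize}
\item Increments with $v\cdot h=0$ kill the cusp term, so $p\in D^\pm\widetilde g(z)$ forces $p\cdot e=\alpha(z)$.
\item Conversely, if $p\cdot e=\alpha(z)$, then the remaining linear term $c\,(v\cdot h)$ is dominated by $-a_P(z)\sqrt{N|v\cdot h|}$. Hence $p\in D^+\widetilde g(z)$, with no restriction on $p\cdot v$.
\item No $p$ lies in $D^-\widetilde g(z)$: the $-\sqrt{|v\cdot h|}$ term beats every linear term as $h\to0$ along $v$.
\end{itemize}
This proves \eqref{eq:even-jets}; the odd case \eqref{eq:odd-jets} is symmetric.

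Finally, every $p$ in the nonempty semidifferential satisfies
\[
|p|\ge p\cdot e=\alpha(z)\ge A(z)-\ep
\]
by \eqref{eq:amplitude}, which is \eqref{eq:cut-bound}.
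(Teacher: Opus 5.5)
Your proposal is correct and follows essentially the same route as the paper. You tilt $e$ toward $\operatorname{sgn}(w')\,v$ by a fixed angle of order $1/(C+1)$, take $N$ large to handle the compact region where $A$ may be small, get the blow-up from $A\to\infty$ near $\partial P$ and from $|w'|\to\infty$ near the cuts, and read the jets off the square-root cusp expansion \eqref{eq:wave-cusp}. The differences are only cosmetic: your tilt $\theta=\frac{1}{2(C+1)}$ replaces the paper's $\frac{1}{4(C+1)}$, and your compact set $\{d_P\ge\delta\}$ replaces the paper's $K_T=\{A\le 2T\}$.
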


\begin{proof}
Fix $j$, and we work within $P_j$ from now on (see Figure \ref{fig:polygons}).
We have $\sigma(z)=\operatorname{sgn}w'(Nv\cdot z)$ is constant in $P_j$.
Set $Q(z)=a_P(z)N|w'(Nv\cdot z)|$.
Then, 
\[
D\widetilde g(z)=A(z)e+B(z)v+w(Nv\cdot z)Da_P(z)+\sigma(z) Q(z)v.
\]

Pick $\delta=\frac1{4(C+1)}$ and set
\[
e_j=\frac{e+\sigma\delta v}{\sqrt{1+\delta^2}},\qquad
v_j=\frac{-\sigma\delta e+v}{\sqrt{1+\delta^2}}.
\]
These two vectors are orthonormal and are suitable perturbations of $e$ and $v$.
Since $|B|\le CA$, $|w|<1$, and $|Da_P|\le\varepsilon$,
\begin{align}
D_{e_j}\widetilde g
&\ge\frac{(1-\delta C)A+\delta Q}{\sqrt{1+\delta^2}}-\varepsilon
 \ge\frac12(A+\delta Q),\label{eq:tilted-lower}\\
|D_{v_j}\widetilde g|
&\le\frac{(C+\delta)A+Q}{\sqrt{1+\delta^2}}+\varepsilon
 \le(C+1)A+Q
 \le8(C+1)D_{e_j}\widetilde g.\label{eq:tilted-other}
\end{align}

\begin{figure}[H]
\centering
\begin{tikzpicture}[>=Latex,scale=.94,font=\small]
  \coordinate (A) at (.05,.5);
  \coordinate (B) at (4.1,.5);
  \coordinate (C) at (5.0,1.2);
  \coordinate (D) at (4.7,3.35);
  \coordinate (E) at (.85,3.2);
  \coordinate (F) at (-.2,2.25);
  \begin{scope}
    \clip (A)--(B)--(C)--(D)--(E)--(F)--cycle;
    \fill[gray!6] (-.5,0) rectangle (5.5,3.7);
    \draw[upper,very thick] (-.5,1.12)--(5.5,1.12);
    \draw[lower,very thick,dashed] (-.5,1.78)--(5.5,1.78);
    \draw[upper,very thick] (-.5,2.44)--(5.5,2.44);
    \draw[lower,very thick,dashed] (-.5,3.10)--(5.5,3.10);
  \end{scope}
  \draw[thick] (A)--(B)--(C)--(D)--(E)--(F)--cycle;
  \draw[->] (1.0,.79)--(1.85,.79) node[right] {$e$};
  \draw[->] (1.0,.79)--(1.0,1.43) node[above] {$v$};
  \draw[->,upper] (2.4,1.58)--(3.15,1.38) node[right] {$e_j$};
  \draw[->,lower] (2.4,1.91)--(3.15,2.08) node[right] {$e_{j+1}$};
  \node at (5.32,3.32) {$P$};
  \node[upper,right] at (5.13,2.44) {even};
  \node[lower,right] at (5.13,1.78) {odd};
\end{tikzpicture}
\caption{Parallel cuts divide a parent polygon into smaller convex
polygons.}
\label{fig:polygons}
\end{figure}
The boundary hypothesis on $A$ makes $K_T:=\{z\in P:A(z)\le2T\}$ a compact subset of $P$.
If it is nonempty, choose an integer
\[
N\ge \max\left\{N_0,\frac{2T}{\delta\min_{K_T}a_P}\right\}.
\]
If $K_T$ is empty, simply choose $N= N_0$.
On $K_T$, \eqref{eq:wave-slope} gives $Q\ge a_P N$, hence
\[
D_{e_j}\widetilde g\ge\delta Q/2\ge T.
\]
Outside $K_T$, $D_{e_j}\widetilde g\geq A/2>T$. 
This proves \eqref{eq:child}. 

If a point in $P_j$ approaches the old boundary $\partial P$, \eqref{eq:tilted-lower} and $A\to\infty$ apply. 
If it approaches a new cut inside $P$, $a_P$ tends to a positive number and $|w'|\to\infty$.
Together with compactness of $\overline{P_j}$, these two observations prove the asserted blowup as $\dist(z,\partial P_j)\to0$.

Fix $z\in P$ with $Nv\cdot z=j$ and set $g_j(z)=g(z)+w(j)a_P(z)$. 
For $h=se+tv$,
\begin{equation}\label{eq:cut-expansion}
\widetilde g(z+h)-\widetilde g(z)
=g_j(z+h)-g_j(z)
 +a_P(z+h)\bigl(w(j+Nt)-w(j)\bigr).
\end{equation}
When $t=0$, the last term vanishes. 
Testing $s>0$ and $s<0$ shows that any $p$ in either semidifferential must satisfy $p\cdot e=D_eg_j(z)=\alpha(z)$.

If $j$ is even, \eqref{eq:wave-cusp} and $a_P(z)>0$ give, for
all sufficiently small $h$, some $\beta>0$ such that the
last term in \eqref{eq:cut-expansion} is at most $-\beta\sqrt{|t|}$. 
For every $p$ with $p\cdot e=\alpha(z)$, Taylor's expansion gives
\[
\widetilde g(z+h)-\widetilde g(z)-p\cdot h
\le O(|h|^2)+|(Dg_j(z)-p)\cdot v|\,|t|
             -\beta\sqrt{|t|}
\le O(|h|^2).
\]
Thus, $p\in D^+\widetilde g(z)$. 
Along $h=tv$, the corresponding first-order quotient tends to $-\infty$, which implies that $D^-\widetilde g(z)=\emptyset$. 
For odd $j$, the conclusion follows in the same way.

Finally, $|w(j)D_ea_P|\le\varepsilon$ and $A\ge2$ imply
\[
|p|\ge |p\cdot e|=\alpha(z)\ge A(z)-\varepsilon.
\]

\end{proof}

\subsection{The planar functions}

\begin{prop}\label{prop:planar}
For $\Om=(0,1)^2$ there exists $f\in C(\overline\Om)$ such
that $\cG^+(f;\Om)$ and $\cG^-(f;\Om)$ are nonempty, disjoint,
and closed relative to $\Om\times\R^2$.
\end{prop}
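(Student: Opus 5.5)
We build $f$ as a uniform limit of an iteration of Lemma~\ref{lem:polygon} on finer and finer polygonal partitions of $\Om$. Start with $g_0(z)=\phi(e\cdot z)$ for $e=(1,0)$, $v=(0,1)$, where $\phi$ is smooth on $(0,1)$ with $\phi'\ge2$ and $\phi'\to\infty$ at the endpoints; for instance $\phi(s)=2s+\arcsin(2s-1)$. Then $g_0\in C(\overline\Om)$, and on $P=\Om$ hypothesis \eqref{eq:parent} fails only near the sides $\{z_2=0,1\}$, where $A$ stays bounded. So the first step is a preliminary stage: either add a term like $\arcsin(2z_2-1)$ tilted so that a rotated pair $(e,v)$ gives $D_eg_0\to\infty$ at all of $\partial\Om$, or first apply one cut in the direction $e$ and use cusps. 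I expect a suitable explicit $g_0$, namely a sum of two arcsin profiles in a rotated frame, to satisfy \eqref{eq:parent} with $C=1$.

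Inductively, at stage $k$ we have finitely many open convex polygons $P$ (the family $\mathcal P_k$), disjoint, whose union is $\Om$ minus a finite union of segments $\Sigma_k$. We also have a function $g_k\in C(\overline\Om)$, smooth on each $P$, and frames $(e_P,v_P)$ satisfying \eqref{eq:parent} with constants $L_k$ and $C_k=8^k(C_0+1)^k$ roughly. Apply Lemma~\ref{lem:polygon} on each $P$ with $T=L_{k+1}=k+2+\sup$ of all previous relevant bounds, $\ep=\ep_k$, and $N\ge N_k$. Since the perturbation $a_P w$ vanishes on $\partial P$ and $|a_P w|\le\ep_k$, the new function $g_{k+1}$ is continuous, equals $g_k$ on $\Sigma_k$, and satisfies $\|g_{k+1}-g_k\|_\infty\le\ep_k$. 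Choosing $\sum\ep_k<\infty$ gives a uniform limit $f\in C(\overline\Om)$. We also choose $N_k$ large so that the polygons have diameter $\to0$.

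The next step is to describe the jets of $f$. A point $z\in\Om$ either lies on some cut of $\Sigma_{k+1}\setminus\Sigma_k$ for a first $k$, or lies in no $\Sigma_k$. At a cut point we need that later perturbations do not destroy the cusp structure \eqref{eq:even-jets}/\eqref{eq:odd-jets}. The later perturbations $a_{P'}w$ have amplitude at most $\ep\,\dist(\cdot,\partial P')^2$ by \eqref{eq:amplitude}, so they are $O(|h|^2)$ near $z$. Choosing the $\ep_m$ summably small compared with the fixed $\beta$ of the cusp ought to preserve the jet formulas exactly. This works if we require $\ep_m$, for $m>k$, to be chosen after the stage-$k$ cusps are fixed, and the quadratic bound has constants independent of $m$; it does, being $\ep_m\dist^2$ with total $\sum\ep_m|h|^2$. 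For non-cut points, we show $D^\pm f(z)=\emptyset$. Here $z$ lies in a nested sequence of polygons $P_k(z)$ shrinking to $z$. Comparing $f$ with nearby cut points of both parities on which upward and downward cusps of size $\gtrsim a_P\sqrt{\cdot}$ sit should force both difference quotients to be unbounded, using that the slopes $D_{e}g_k\ge L_k\to\infty$. Alternatively, we can prove non-differentiability with any finite gradient via $|p|\ge A-\ep\ge L_k-1$ and a tilting argument. This step is the main obstacle. I would try to show that $p\in D^+f(z)$ would force $|p\cdot e_{P_k(z)}|\gtrsim L_k$ for all $k$, which is impossible.

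Finally we check closedness and disjointness. Disjointness is immediate, since $\cG^+$ lives on even cuts and $\cG^-$ on odd cuts, which are distinct points. For relative closedness, take $(z_n,p_n)\in\cG^+$ with $z_n\to z\in\Om$ and $p_n\to p$. By \eqref{eq:cut-bound}, $|p_n|\ge A(z_n)-\ep\ge L_{k_n}-1$, where $k_n$ is the generation of the cut through $z_n$. Boundedness of $p_n$ forces $k_n$ to stay bounded, so finitely many segments contain the $z_n$ and $z$ lies on one of them, or on its endpoint on $\partial$ of a parent polygon, where $A\to\infty$, which is excluded. The jet on a fixed even cut is the closed affine family $\{p\cdot e=\alpha(z)\}$ with $\alpha$ continuous, so $(z,p)\in\cG^+$. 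The same argument gives $\cG^-$. Nonemptiness holds since cuts of both parities exist.
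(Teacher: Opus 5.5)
The proposal leaves a genuine gap at points not on any cut. Your overall scheme matches the paper's. You iterate Lemma~\ref{lem:polygon} on the components of the complement of the current skeleton, and use the quadratic bound $|a_{P'}w|\le\ep\,\dist(\cdot,\partial P')^2$ to preserve the cusp jets in the limit. You get closedness from $|p|\ge L_{k-1}-\ep_k$ together with the blow-up at $\partial P$. Your guessed initial function, a sum of two arcsin profiles in a rotated frame, is exactly the paper's $f_0=\arcsin(2z_1-1)+\arcsin(2z_2-1)$ with $e_0=(1,1)/\sqrt2$ and $C=1$.

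The gap is the case $z\in\Om\setminus\bigcup_k\Sigma_k$. You flag it as the main obstacle and only list possible strategies. It is not a side issue: your disjointness argument ("$\cG^+$ lives on even cuts and $\cG^-$ on odd cuts") and your closedness argument ("$k_n$ is the generation of the cut through $z_n$") both assume that every point with a nonempty semidifferential lies on a cut. If $f$ were differentiable at a non-cut point $z$, then $(z,Df(z))$ would lie in both graphs.

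The missing argument is short and uses only tools you already have.

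\emph{Tail bound.} Summing the increment bounds, with $\Sigma_n\subset\Sigma_{m-1}$ for $m>n$, gives
\[
|f-g_n|\le\Bigl(\sum_{m>n}\ep_m\Bigr)\dist(\cdot,\Sigma_n)^2 \quad\text{on }\Om .
\]

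\emph{Choice of increments.} For a non-cut $z$, let $P_n$ be the stage-$n$ polygon containing $z$, with direction $e_n$, and set $r_n=\frac12\dist(z,\Sigma_n)$. Then $r_n\to0$, since $P_n$ lies in a strip of width $1/N\to0$. The segments $[z,z\pm r_ne_n]$ lie in $P_n$, where $D_{e_n}g_n\ge L_n\to\infty$. Hence $\pm\bigl(g_n(z\pm r_ne_n)-g_n(z)\bigr)\ge L_nr_n$.

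\emph{Conclusion.} Passing from $g_n$ to $f$ costs only $O(\ep_nr_n^2)$ at $z$ and at $z\pm r_ne_n$. So for any fixed $p$, the quotient $(f(z+h)-f(z)-p\cdot h)/|h|$ is at least $L_n-|p|-o(1)\to+\infty$ along $h=r_ne_n$. It is at most $-L_n+|p|+o(1)\to-\infty$ along $h=-r_ne_n$. Therefore $D^+f(z)=D^-f(z)=\emptyset$.

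You do not need any comparison with nearby cusps, tilting, or diameter condition. You only need $\dist(z,\Sigma_n)\to0$ and an error that is quadratic in that distance.

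A side remark on your cut-point step: requiring $\ep_m$ to be small relative to the cusp constant $\beta$ is unnecessary. It also could not be done uniformly along a cut, since $\beta$ is proportional to $a_P(z)$, which tends to $0$ at the cut's endpoints. The $O(|h|^2)$ bound by itself already leaves $D^\pm$ unchanged.
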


We construct $f$ by iterating Lemma \ref{lem:polygon}. At each stage, every
current polygon is subdivided into finitely many smaller convex polygons, and
the function is perturbed so that the same structural properties hold on each
child polygon. Importantly, the perturbed function agrees with the previous
one on the boundary of each parent polygon; hence all previously created edges
remain unchanged at later stages. Repeating this process produces an increasing polygonal skeleton
formed by all cuts created during the construction, while the corresponding
functions converge to a limit $f$ with the desired properties.
The iterative construction is related to that in \cite{LST26}.

\begin{proof}
We divide the proof into several steps.

\medskip

\noindent {\bf Step 1 (Iterative constructions).}
For $n\ge0$, let $\varepsilon_n=2^{-n-4}$. 
We begin with
\begin{equation}\label{eq:f0}
f_0(z_1,z_2)
=\arcsin(2z_1-1)+\arcsin(2z_2-1),
\qquad S_0=\partial\Om.
\end{equation}
This function is continuous on $\overline\Om$ and smooth in $\Om$.
Its partial derivatives are
\[
\frac{\partial f_0}{\partial z_i}
  =\frac1{\sqrt{z_i(1-z_i)}}\ge2,\qquad i=1,2.
\]
With $e_0=(1,1)/\sqrt2$, $v_0=(-1,1)/\sqrt2$ and $L_0=2$, we have
\[
D_{e_0}f_0\ge2\sqrt2\ge L_0,\qquad
|D_{v_0}f_0|\le D_{e_0}f_0,
\]
and $D_{e_0}f_0(z)\to\infty$ as
$\dist(z,\partial\Om)\to0$. Thus Lemma~\ref{lem:polygon}
applies to the square $\Om$.

The induction is as follows.
For $n\in \N$, let $S_{n-1}$ be the union of all edges of all convex polygons created at step $n-1$.
Every connected component $P$ of $\Om\setminus S_{n-1}$ is a convex polygon on which $f_{n-1}$ is smooth and satisfies \eqref{eq:parent}, with
$L_{n-1}=n+1$ and some pair $(e_P,v_P)$ and constant $C_P$.
Apply Lemma~\ref{lem:polygon} separately on each $P$ with
\[
\varepsilon=\varepsilon_n,\qquad T=n+2,\qquad
N_0=\max\{n,4\}.
\]
Choose \(N_P\) as in Lemma 3.1, taking the least admissible integer.  Set
\[
f_n(z)=
\begin{cases}
f_{n-1}(z)+a_P^{\ep_n}(z)w(N_Pv_P\cdot z),\qquad&z\in P,\\
f_{n-1}(z),\qquad&z\in S_{n-1}.
\end{cases}
\]
Inside each $P$, the distance to $S_{n-1}$ equals the
distance to $\partial P$. Indeed, $\partial P\subset S_{n-1}$,
and a path from a point in $P$ to any point of $S_{n-1}$
must meet $\partial P$. The bound \eqref{eq:amplitude}, together
with $|w|<1$, gives
\begin{equation}\label{eq:increment}
|f_n(z)-f_{n-1}(z)|
\le\varepsilon_n\min\{1,\dist(z,S_{n-1})^2\},
\qquad z\in\overline\Om.
\end{equation}
Note that $a_P^{\ep_n}(z)w(N_Pv_P\cdot z)=0$ for $z\in \partial P$, and thus, $f_n$ is continuous on $\overline\Om$. 
We also note that $S_n$ consist of $S_{n-1}$ and the closures of all nonempty
cuts
\begin{equation}\label{eq:stage-cuts}
P\cap\{z:N_Pv_P\cdot z=j\},\qquad j\in\mathbb Z,
\end{equation}
in all parent polygons. 
See Figure \ref{fig:nested-partitions}.

\begin{figure}[H]
\centering
\begin{tikzpicture}[>=Latex,font=\small]
  \def\side{3.52}
  \def\hOne{0.61}
  \def\hTwo{0.203333}
  \def\turnOne{0.125}
  \def\turnTwo{0.015}
  \foreach \offset/\stage in {0/1,4.45/2,8.90/3}{
    \begin{scope}[shift={(\offset,0)}]
      \fill[gray!3] (0,0) rectangle (\side,\side);
      \begin{scope}
        \clip (0,0) rectangle (\side,\side);
        \begin{scope}[rotate=45]
          \ifnum\stage>1
            \foreach \j in {-5,...,4}{
              \pgfmathsetmacro{\sgn}{ifthenelse(mod(abs(\j),2)==0,-1,1)}
              \pgfmathsetmacro{\lo}{\j*\hOne}
              \pgfmathsetmacro{\hi}{(\j+1)*\hOne}
              \begin{scope}
                \clip (-1,\lo) rectangle (6,\hi);
                \foreach \k in {-4,...,7}{
                  \pgfmathtruncatemacro{\m}{3*\j+\k}
                  \pgfmathsetmacro{\base}{\m*\hTwo}
                  \ifnum\stage>2
                    \pgfmathsetmacro{\nextsgn}{ifthenelse(mod(abs(\m),2)==0,-1,1)}
                    \begin{scope}
                      \clip (-1,{\base-3.5*\sgn*\turnOne})
                         -- (6,{\base+3.5*\sgn*\turnOne})
                         -- (6,{\base+\hTwo+3.5*\sgn*\turnOne})
                         -- (-1,{\base+\hTwo-3.5*\sgn*\turnOne}) -- cycle;
                      \foreach \ell in {1,2}{
                        \pgfmathsetmacro{\level}{\base+\ell*\hTwo/3}
                        \pgfmathsetmacro{\slope}{\sgn*\turnOne+\nextsgn*\turnTwo}
                        \ifodd\ell
                          \draw[lower!62!white,line width=.32pt]
                            (-1,{\level-3.5*\slope})
                            -- (6,{\level+3.5*\slope});
                        \else
                          \draw[upper!62!white,line width=.32pt]
                            (-1,{\level-3.5*\slope})
                            -- (6,{\level+3.5*\slope});
                        \fi
                      }
                    \end{scope}
                  \fi
                  \ifodd\m
                    \draw[lower!78!white,line width=.65pt,densely dashed]
                      (-1,{\base-3.5*\sgn*\turnOne})
                      -- (6,{\base+3.5*\sgn*\turnOne});
                  \else
                    \draw[upper!78!white,line width=.65pt]
                      (-1,{\base-3.5*\sgn*\turnOne})
                      -- (6,{\base+3.5*\sgn*\turnOne});
                  \fi
                }
              \end{scope}
            }
          \fi
          \foreach \j in {-4,...,4}{
            \ifodd\j
              \draw[lower,line width=1.0pt,densely dashed]
                (-1,{\j*\hOne}) -- (6,{\j*\hOne});
            \else
              \draw[upper,line width=1.0pt]
                (-1,{\j*\hOne}) -- (6,{\j*\hOne});
            \fi
          }
        \end{scope}
      \end{scope}
      \draw[black!80,line width=1.25pt] (0,0) rectangle (\side,\side);
      \node[font=\normalsize] at (1.76,3.88) {$S_{\stage}$};
    \end{scope}
  }
  \draw[->,gray!65] (3.73,1.76)--(4.25,1.76);
  \draw[->,gray!65] (8.18,1.76)--(8.70,1.76);
  \draw[upper,line width=1.1pt] (.20,-.54)--(.70,-.54);
  \node[anchor=west] at (.84,-.54) {downward cusps};
  \draw[lower,line width=1.1pt,densely dashed] (4,-.54)--(4.5,-.54);
  \node[anchor=west] at (4.64,-.54) {upward cusps};
  \draw[black!80,line width=1.1pt] (8.2,-.54)--(8.7,-.54);
  \node[anchor=west] at (8.84,-.54) {$S_0=\partial\Omega$};
  \node[font=\normalsize] at (6.21,-1.50)
    {$S_\infty=\displaystyle\bigcup_{n\ge0}S_n,
      \qquad \overline{S_\infty}=\overline\Omega$};
\end{tikzpicture}
\caption{Successive partitions of $\Omega=(0,1)^2$ (schematic).}
\label{fig:nested-partitions}
\end{figure}

\medskip

\noindent {\bf Step 2 (the limit and its first-order differentials).}
Since $\sum_n\varepsilon_n<\infty$, $\{f_n\}$ converges uniformly to some $f\in C(\overline\Om)$. 
More precisely, as $S_n\subset S_{m-1}$ when $m>n$,
we sum \eqref{eq:increment} and obtain, for $z\in \Omega$,
\begin{equation}\label{eq:tail}
|f(z)-f_n(z)|
\le \left(\,\sum_{m=n+1}^\infty\varepsilon_m\right)\dist(z,S_n)^2=2^{-n-4}\dist(z,S_n)^2.
\end{equation}
If $z\in S_n\cap\Om$, then $f(z)=f_n(z)$ and, for $z+h\in \Omega$,
\[
|f(z+h)-f_n(z+h)|\le2^{-n-4}|h|^2.
\]
Consequently, for $z\in S_n\cap\Om$,
\begin{equation}\label{eq:preservation}
D^\pm f(z)=D^\pm f_n(z).
\end{equation}
Let
\[
S_\infty=\bigcup_{n\ge0}S_n.
\]
We now identify the first-order semidifferentials of the limit function $f$.
We will prove the following dichotomy:

\medskip
\noindent
(1) If $z\in S_\infty\cap\Om$, then exactly one of $D^+f(z)$ and
$D^-f(z)$ is an affine line, while the other is empty;

\medskip
\noindent
(2) If $z\in\Om\setminus S_\infty$, then
\[
D^+f(z)=D^-f(z)=\emptyset.
\]

In fact, if $z\in S_\infty\cap\Om$, let $k\ge1$ be the smallest number such that $z\in S_k$.
Then $z$ is inside a parent polygon $P$ at step $k-1$ and lies on one of its cuts \eqref{eq:stage-cuts}.
Endpoints of new segments belong to the old parent boundary
and hence to $S_{k-1}$, so $z$ is not such an endpoint. 
By \eqref{eq:even-jets}--\eqref{eq:odd-jets}, exactly one of $D^+f(z)$ and $D^-f(z)$ is an affine line, and the other is empty. 
By \eqref{eq:cut-bound}, every $p$ in the nonempty one satisfies
\begin{equation}\label{eq:generation}
|p|\ge L_{k-1}-\varepsilon_k
=k+1-\varepsilon_k,\qquad
|p|\ge D_{e_P}f_{k-1}(z)-\varepsilon_k.
\end{equation}

Now let $z\in\Om\setminus S_\infty$. For $n\geq 0$, let $P_n$ be its step-$n$ polygon such that $z\in P_n$ and $e_n$ the unit direction supplied by the construction. 
Define
\[
d_n=\dist(z,S_n)=\dist(z,\partial P_n).
\]
For $n\geq 1$, the polygon $P_n$ is contained in a strip of width $1/N_P\le1/n$, which implies
\begin{equation}\label{eq:shrinking}
0<d_n\le\frac1{2n}.
\end{equation}
Let $r_n=d_n/2$.
Both segments from $z$ to $z\pm r_ne_n$ lie in $P_n$.
Integrating $D_{e_n}f_n\ge n+2$ on these segments gives
\[
f_n(z+r_ne_n)-f_n(z)\ge (n+2)r_n,\qquad
f_n(z-r_ne_n)-f_n(z)\le-(n+2)r_n.
\]
As $\dist(z\pm r_ne_n,S_n)\le3d_n/2$, the tail bound
\eqref{eq:tail} implies
\begin{align*}
    &f(z+r_ne_n)-f(z)\ge (n+2)r_n - 2^{-n-4}[d_n^2+(3d_n/2)^2] \ge(n+2)r_n - 2^{-n}  r_n^2,\\
    &f(z-r_ne_n)-f(z)\le-(n+2)r_n+2^{-n-4}[d_n^2+(3d_n/2)^2] \le-(n+2)r_n + 2^{-n}  r_n^2.
\end{align*}
For any fixed $p\in\R^2$ it follows that
\begin{align*}
\liminf_{n\to\infty}\frac{f(z+r_ne_n)-f(z)-p\cdot(r_ne_n)}{r_n}
&\ge \liminf_{n\to\infty}((n+2)-|p|-2^{-n}r_n)=+\infty,\\
\limsup_{n\to\infty}\frac{f(z-r_ne_n)-f(z)-p\cdot(-r_ne_n)}{r_n}
&\le\limsup_{n\to\infty}(-(n+2)+|p|+2^{-n}r_n)=-\infty.
\end{align*}
As $p$ is arbitrary, for $z\in\Om\setminus S_\infty$,
\begin{equation}\label{eq:no-jets}
D^+f(z)=D^-f(z)=\emptyset.
\end{equation}

\medskip

\noindent {\bf Step 3 (Closedness of the planar graphs).}
The formula of $S_\infty$ and \eqref{eq:no-jets} show that $\cG^+(f;\Om)$ and $\cG^-(f;\Om)$ are disjoint.
We prove they are relatively closed. 
It is enough to consider $\cG^+(f;\Om)$.
Suppose
\[
z_i\to z\in\Om,\qquad p_i\to p\in\R^2,\qquad
p_i\in D^+f(z_i).
\]
Every $z_i$ lies on $S_\infty$.
Let $k_i\in \N$ be the smallest number such that $z_i \in S_{k_i}$. 
The first bound in \eqref{eq:generation} and the boundedness of $\{p_i\}$ make $\{k_i\}$ bounded. 
Passing to a subsequence, we may fix the step $k$, the parent polygon $P$, and one cut $N_Pv_P\cdot z_i=j$ for all $i$.

In fact, $z\in P$. 
If $z$ lay on $\partial P$, the parent's boundary blow-up and the second bound in \eqref{eq:generation} would give $|p_i|\to\infty$, a contradiction. 
Thus, $z\in P$ and $N_Pv_P\cdot z=j$. 
The fact that $p_i\in D^+f(z_i)$ means $j$ is even, and the exact formula on that cut says
\[
p_i\cdot e_P
=D_{e_P}f_{k-1}(z_i)
  +w(j)D_{e_P}a_P(z_i).
\]
Everything on the right is continuous in $P$.
Passing to the limit gives the same identity for $(z,p)$.
Equations \eqref{eq:even-jets} and \eqref{eq:preservation} now show $p\in D^+f(z)$. 

It is clear from the construction that both graphs are nonempty. 
\end{proof}

\subsection{Proof of the main theorems}
We are ready to prove the main theorems.

\begin{proof}[Proof of Theorem~\ref{thm:main}]
Take $f$ from Proposition~\ref{prop:planar}.
Let
\[
z_0=(\tfrac12,\tfrac12),\qquad
O=(-2,2)^2,\qquad
F(x)=f(z_0+x/4)\qquad \text{for }x\in O.
\]
The affine map $x\mapsto z_0+x/4$ sends $O$ onto
$\Om$, with $\overline{B^2_1}\Subset O$. Directly from
the definitions of the semidifferentials,
\begin{equation}\label{eq:affine-jets}
D^\pm F(x)=\tfrac14D^\pm f(z_0+x/4).
\end{equation}
The map $(x,q)\mapsto(z_0+x/4,4q)$ is a homeomorphism
from $O\times\R^2$ to $\Om\times\R^2$ carrying the two
graphs of $F$ onto those of $f$. Thus they are relatively
closed and disjoint. 
Apply Lemma~\ref{lem:lift} with $m=2$ to conclude.
\end{proof}

\begin{proof}[Proof of Theorem~\ref{thm:main2}]
Let $F$ be as in the proof of Theorem~\ref{thm:main}.  Since $\overline{B^2_1}\Subset O$,  $G^+(F,\overline{B^2_1})$ and $G^-(F,\overline{B^2_1})$ are disjoint closed set on $\R^4$.  Let $K\in C^\infty(\R^4;[1,2])$ be a smooth separation function $G^+(F,\overline{B^2_1})$ and $G^-(F,\overline{B^2_1})$ such that
\[
K=1 \quad \text{on $G^+(F,\overline{B^2_1})$} \quad \mathrm{and} \quad K=2 \quad \text{on $G^-(F,\overline{B^2_1})$}.
\]
It is clear that $F$ is a viscosity solution of
\begin{equation*}
K(x,DF)=c\qquad\text{in }B^2_1
\end{equation*}
for every $c\in[1,2]$. 
\end{proof}

\begin{rem}
By translation and scaling, both theorems hold on any bounded domain of the corresponding dimension. 
The nonautonomous result in Theorem \ref{thm:main2} also extends to the whole space. 
Indeed, let \(\Phi:\mathbb R^2\to B_1^2\) be the smooth diffeomorphism \(\Phi(z)=z/\sqrt{1+|z|^2}\), and set
\[
\widetilde F(z)=F(\Phi(z)),
\qquad
\widetilde K(z,p)=K\bigl(\Phi(z),(D\Phi(z)^{-1})^Tp\bigr).
\]
Then \(\widetilde F\) is a viscosity solution of \(\widetilde K(z,D\widetilde F)=c\) in \(\mathbb R^2\) for every \(c\in[1,2]\). 
\end{rem}
\begin{rem}
The dimension in Theorem \ref{thm:main2} is optimal. Indeed, suppose
that a continuous, nowhere-differentiable function $u$ solves
\[
K(x,u')=c \qquad\text{in }B_1^1
\]
in the viscosity sense, where $K$ is continuous. Since $u$ cannot
be monotone on any open subinterval, its local maximum and local
minimum points are both dense. Constant test functions at these
points give $K(x,0)\le c$ and $K(x,0)\ge c$, respectively.
Continuity of $K$ then yields
\[
K(x,0)=c \qquad\text{for every }x\in B_1^1.
\]

Thus, our lifting construction cannot produce a two-dimensional
autonomous example by starting from a one-dimensional function.
Whether Theorem \ref{thm:main} holds in dimension two or dimension three remains an
interesting question.
\end{rem}

\section*{Use of AI Tools}

The authors used ChatGPT 6 Pro to assist in developing the construction of the pathological solutions through a series of discussions. These discussions were guided by the authors’ ideas and suggestions, including the strategy of constructing disjoint sets through an iterative procedure. The authors verified the mathematical arguments and prepared the manuscript. They take full responsibility for the correctness and content of the paper.

\end{document}